\newtheorem{question}{Question}
\newtheorem{lemma}{Lemma}
\newtheorem{theorem}{Theorem}
\newtheorem{example}{Example}
\title{A note on the use of Rédei polynomials for solving the polynomial Pell equation and its generalization to higher degrees}
\author{Nadir Murru \\
Department of Mathematics G. Peano, University of Torino\\
Via Carlo Alberto 10, 10123, Torino, ITALY\\
nadir.murru@unito.it}
\date{}
\begin{document}
\maketitle

\begin{abstract}
The polynomial Pell equation is
\[P^2 - D Q^2 = 1\]
where $D$ is a given integer polynomial and the solutions $P, Q$ must be integer polynomials. A classical paper of Nathanson \cite{Nat} solved it when $D(x) = x^2 + d$. We show that the Rédei polynomials can be used in a very simple and direct way for providing these solutions. Moreover, this approach allows to find all the integer polynomial solutions when $D(x) = f^2(x) + d$, for any $f \in \mathbb Z[X]$ and $d \in \mathbb Z$, generalizing the result of Nathanson.
We are also able to find solutions of some generalized polynomial Pell equations introducing an extension of Rédei polynomials to higher degrees.
\end{abstract}

\textbf{Keywords:} Pell equation, polynomial Pell equation, Rédei polynomial\\
\indent \textbf{2010 Mathematics Subject Classification:} 11A99, 11D41, 11R09

\section{Introduction}

The Pell equation 
\[x^2 - d y^2 = 1\]
is one of the most famous Diophantine equations. It has infinite integer solutions when $d$ is not a square and they can be determined using the continued fraction's expansion of $\sqrt{d}$. It is very interesting to study the polynomial Pell equation, i.e., find polynomials $P$ and $Q$ in $\mathbb Z[X]$ satisfying 
\begin{equation} \label{eq:pell} P^2 - D Q^2 = 1 \end{equation}
where $D \in \mathbb Z[X]$ is a fixed polynomial. Nathanson \cite{Nat} solved the polynomial Pell equation when $D = x^2 + d$. In particular, he proved that it has non-trivial solutions if and only if $d = \pm 1, \pm 2$ and provided explicitly the polynomial solutions. Pastor \cite{Pas} showed that the solutions of Nathanson can be also expressed in terms of Chebyshev polynomials. Webb and Yokota \cite{WY1} found a necessary and sufficient condition for which the polynomial Pell equation has non-trivial solutions when $D = A^2 + 2 C$ monic polynomial, $A/C \in \mathbb Z[X]$ and $\deg C < 2$. In \cite{WY2}, such result is generalized when $pA/C \in \mathbb Z[X]$, for some prime $p$, without any condition on the degree of $C$. In this case, the authors also determined the solutions. Then, Yokota \cite{Yok} found a necessary and sufficient condition for the solution of the polynomial Pell equation when $A/C \in \mathbb Q[X]$. Hazama \cite{Haz} studied the polynomial Pell equation using the twist of a conic by another conic. Further studies can be found in \cite{Mol} and \cite{Ram}. Mc Laughlin \cite{Mac} focused on the relations between polynomial solutions of the Pell equation and fundamental units of real quadratic fields. Some authors studied polynomial solutions of 
\eqref{eq:pell} in $\mathbb C[X]$, see \cite{Dub}, \cite{Gri}, \cite{Zap}.
Despite many studies on the polynomial Pell equation, there are no works that highlight its connection with the Rédei polynomials, which are also classical tools in number theory, see \cite{Red}, \cite{Lidl}.
In this paper, we show this connection and how using Rédei polynomials for solving \eqref{eq:pell} for polynomials $D$ more general than the polynomials studied in previous works. 

The paper is structured as follows.
We introduce the Rédei polynomials in section \ref{sec:pell} where we show that solutions of \eqref{eq:pell} are the Rédei polynomials for some kind of polynomials $D$. The solutions of Nathanson arise as particular cases. The used method, even if simple and straightforward, is very general and allows to approach also polynomial Pell equations with higher degrees by means of a generalization of the Rédei polynomials, as we will see in section \ref{sec:higher}. 

\section{Solution of the polynomial Pell equation via Rédei polynomials} \label{sec:pell}

Rédei polynomials were introduced in \cite{Red} and they are a particular case of Dickson polynomials \cite{Lidl}. They arise from the development of
\[ (\sqrt{\alpha} + z)^n = D_n(\alpha, z) \sqrt{\alpha} + N_n(\alpha, z), \quad n = 0, 1, \ldots \]
where $z, \alpha \in\mathbb Z$, $\alpha$ not square. They can be written in the following closed form:
\[ N_n(\alpha, z) = \sum_{k=0}^{[n/2]}\binom{n}{2k}\alpha^k z^{n-2k}, \quad D_n(\alpha, z) = \sum_{k=0}^{[n/2]}\binom{n}{2k+1}\alpha^k z^{n-2k-1}.\]
It will be very useful the following matricial identity that can be easily proved by induction:
\begin{equation} \label{eq:redei-matrix} \begin{pmatrix} z & \alpha \cr 1 & z  \end{pmatrix}^n = \begin{pmatrix} N_n(\alpha, z) & \alpha D_n(\alpha, z) \cr D_n(\alpha, z) & N_n(\alpha, z) \end{pmatrix}, \end{equation}
from which follows
\begin{equation} \label{eq:matrices} \begin{pmatrix} z & \alpha \cr 1 & z  \end{pmatrix} \begin{pmatrix} N_{n-1}(\alpha, z) \cr D_{n-1}(\alpha, z) \end{pmatrix} = \begin{pmatrix} N_n(\alpha, z) \cr D_n(\alpha, z) \end{pmatrix}, \quad  \begin{pmatrix} N_{n-1}(\alpha, z) \cr D_{n-1}(\alpha, z) \end{pmatrix} = \begin{pmatrix} \cfrac{z}{z^2 - \alpha} & -\cfrac{\alpha}{z^2 - \alpha} \cr -\cfrac{1}{z^2 - \alpha} & \cfrac{z}{z^2 - \alpha}  \end{pmatrix} \begin{pmatrix} N_n(\alpha, z) \cr D_n(\alpha, z) \end{pmatrix}\end{equation}
From \eqref{eq:redei-matrix}, it also follows that the Rèdei polynomials are linear recurrent sequences with characteristic polynomial $t^2 - 2 z t + z^2 - \alpha$, i.e.,
\begin{eqnarray}\label{eq:redei-ric}\begin{cases}   
N_n(\alpha, z) = 2 z N_{n-1}(\alpha, z) - (z^2 - \alpha) N_{n-2}(\alpha, z), \quad n\geq2 \cr
N_0(\alpha, z) = 1, N_1(\alpha, z) = z 
\end{cases}, \\ \label{eq:redei-ric2}
\begin{cases}   
D_n(\alpha, z) = 2 z D_{n-1}(\alpha, z) - (z^2 - \alpha) D_{n-2}(\alpha, z), \quad n\geq2 \cr
D_0(\alpha, z) = 0, D_1(\alpha, z) = 1 
\end{cases}\end{eqnarray}
Finally, we can observe that $N_n(\alpha, z)$ and $D_n(\alpha, z)$ are polynomials of degree $n$ and $n-1$ in $z$, respectively. The Rédei rational functions $\cfrac{N_n(\alpha, z)}{D_n(\alpha, z)}$ have been studied and applied in several fields. For instance, they have been exploited to create public key cryptographic systems \cite{Bel}, \cite{Nob} and to generate pseudorandom sequences \cite{Top}. Moreover, in \cite{Barb}, the authors used the Rédei rational functions for generating solutions of the classical Pell equation. For further properties of the Rédei polynomials, see \cite{Lidl}.

In the classical paper of Nathanson \cite{Nat}, he proved that the polynomial Pell equation
\[ P^2 - (x^2 + d)Q^2 = 1, \]
has non-trivial solutions $P(x), Q(x) \in \mathbb Z[X]$ if and only if $d = \pm 1, \pm 2$ and he gave these polynomials explicitly:
\[ \begin{cases} 
A_n = \left( \frac{2}{d} x^2 + 1 \right)A_{n-1} + \frac{2}{d}x(x^2+d)B_{n-1}, \quad A_0 = 1 \cr
B_n = \frac{2}{d} x A_{n-1} + \left( \frac{2}{d}x^2 + 1 \right)B_{n-1}, \quad B_0 = 0
 \end{cases} \]
for any $n \geq 1$, when $d = 1, \pm 2$ and
\[ \begin{cases} 
A'_n = xA'_{n-1} + (x^2 - 1)B'_{n-1}, \quad A'_0 = 1 \cr
B'_n = A'_{n-1} + xB'_{n-1}, \quad B'_0 = 0
 \end{cases} \]
for any $n \geq 1$, when $d = -1$. Using the Rédei polynomials, we can solve a more general case of polynomial Pell equations, where we retrieve the results of Nathanson as a particular case. In the next theorem we see that the Rédei polynomials are polynomial solutions of
\begin{equation} \label{eq:pell-gen} P^2 - (f^2(x) + d)Q^2 = 1,\end{equation}
where $f(x)$ is any integer polynomial and $d \in \mathbb Z$.

\begin{theorem} \label{thm:main}
Given $d \in \mathbb Z$ and $f \in \mathbb Z[X]$, consider the Rédei polynomials $N_n(x) = N_n(f^2(x) + d, f(x))$ and $D_n(x) = D_n(f^2(x) + d, f(x))$, then 
\[ \left( \frac{N_n(x)}{(-d)^{n/2}} \right)^2 - (f^2(x) + d) \left( \frac{D_n(x)}{(-d)^{n/2}} \right)^2 = 1. \]
Moreover, the solutions are integer polynomials if and only if $d = 1, \pm 2$ and $n$ even, $d = -1$ for any $n$.
\end{theorem}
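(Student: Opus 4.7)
The plan has two parts: establish the Pell identity, then determine when the Rédei polynomials divided by $(-d)^{n/2}$ have integer polynomial coefficients.

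For the identity, my first step is to take determinants on both sides of the matrix identity \eqref{eq:redei-matrix}. The left side yields $(z^2 - \alpha)^n$ and the right side yields $N_n(\alpha, z)^2 - \alpha D_n(\alpha, z)^2$, producing the classical Rédei identity
\[
N_n(\alpha, z)^2 - \alpha D_n(\alpha, z)^2 = (z^2 - \alpha)^n.
\]
Substituting $\alpha = f^2(x) + d$ and $z = f(x)$ collapses $z^2 - \alpha$ to $-d$, and dividing through by $(-d)^n$ gives the claimed Pell equation. This part is essentially a one-liner once the determinant computation is carried out.

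The characterization of when the solutions lie in $\mathbb{Z}[x]$ requires more care. The easy cases are $d = -1$, where $(-d)^{n/2} = 1$ for every $n$, and $d = 1$ with $n$ even, where $(-d)^{n/2} = \pm 1$. The decisive case is $d = \pm 2$ with $n = 2k$ even, and here I would work in the ring $\mathbb{Z}[f][\sqrt{\alpha}]$ with $\alpha = f^2 + d$, using $\beta := f + \sqrt{\alpha}$ so that $\beta^n = N_n + D_n \sqrt{\alpha}$. The key observation is
\[
\beta^2 = (2f^2 + d) + 2 f \sqrt{\alpha} = 2\Bigl(f^2 + \tfrac{d}{2} + f \sqrt{\alpha}\Bigr),
\]
which for $d = \pm 2$ puts $\beta^2/2$ in $\mathbb{Z}[f][\sqrt{\alpha}]$. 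Raising to the $k$-th power gives $\beta^{2k}/2^k \in \mathbb{Z}[f][\sqrt{\alpha}]$, and matching coefficients in the basis decomposition $\mathbb{Z}[f] + \mathbb{Z}[f]\sqrt{\alpha}$ shows that $2^k$ divides both $N_{2k}$ and $D_{2k}$ in $\mathbb{Z}[f]$, hence in $\mathbb{Z}[x]$.

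For the converse, I need to rule out every remaining $(d,n)$. For $n$ odd, $(-d)^{n/2}$ is not even rational unless $-d$ is a perfect square, and combined with the bound below only $d = -1$ survives. For $n \ge 2$, a necessary condition is that $(-d)^{n/2}$ divides $N_n$ inside $\mathbb{Z}[f]$, which in turn forces $|d|^{n/2}$ to divide the leading coefficient of $N_n$ as a polynomial in $f$; a direct computation from the closed form gives this leading coefficient as $\sum_{k=0}^{\lfloor n/2\rfloor} \binom{n}{2k} = 2^{n-1}$, and an elementary divisibility check then yields $|d| \in \{1,2\}$. The main obstacle is spotting the doubling identity $\beta^2 = 2(f^2 + d/2 + f\sqrt{\alpha})$ that unlocks the $d = \pm 2$ case; once this is in hand, divisibility by $2^k$ drops out from taking powers in $\mathbb{Z}[f][\sqrt{\alpha}]$ and one avoids a more awkward induction using the recurrences \eqref{eq:redei-ric}--\eqref{eq:redei-ric2}.
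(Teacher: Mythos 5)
Your proof of the Pell identity is exactly the paper's: take determinants in \eqref{eq:redei-matrix}, specialize $z=f(x)$, $\alpha=f^2(x)+d$ so that $z^2-\alpha=-d$, and divide by $(-d)^n$. Where you genuinely diverge is the integrality analysis. The paper proves $d^{[n/2]}\mid N_n$ for $d=\pm2$ by induction on the recurrence $N_n=2fN_{n-1}+dN_{n-2}$ from \eqref{eq:redei-ric}; you instead observe that $\beta=f+\sqrt{\alpha}$ satisfies $\beta^2=2\bigl(f^2+\tfrac{d}{2}+f\sqrt{\alpha}\bigr)$, so that for $d=\pm2$ the element $\beta^2/2$ lies in $\mathbb{Z}[x][\sqrt{\alpha}]$ and $(\beta^2/2)^k$ immediately gives $2^k\mid N_{2k}$, $2^k\mid D_{2k}$ after separating the components (which requires noting that $1,\sqrt{\alpha}$ are independent over $\mathbb{Q}(x)$, i.e.\ that $f^2+d$ is not a square --- automatic for nonconstant $f$ and $d\neq0$). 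This is a clean, induction-free substitute. Your converse is also different and in fact sharper than the paper's: the paper only exhibits $d\nmid N_2=2f^2+d$ for $d\neq\pm1,\pm2$, whereas your observation that the top coefficient of $N_n$ as a polynomial in $f$ is $\sum_k\binom{n}{2k}=2^{n-1}$ rules out $|d|\geq3$ uniformly in $n$ (since $|d|^{n/2}\mid 2^{n-1}$ forces $|d|=2^j$ with $jn/2\leq n-1$, hence $j\leq1$). One caveat applies equally to you and to the paper: both arguments silently pass from divisibility of coefficients in $\mathbb{Z}[f]$ to divisibility in $\mathbb{Z}[x]$, which presumes $f$ is not itself divisible by the relevant integer (e.g.\ $d=4$, $f=2x$, $n=2$ actually yields integer solutions); this is a gap in the theorem's ``only if'' as stated, not a defect peculiar to your argument.
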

\begin{proof}
From \eqref{eq:redei-matrix} we have
\[ N_n^2(\alpha, z) - \alpha D_n^2(\alpha, z) = (z^2 - \alpha)^n, \]
for any $n \geq 1$. Considering $z = f(x)$ and $\alpha = f^2(x) + d$ we obtain
\[N_n^2(x) - (f^2(x) + d)D_n^2(x) = (-d)^n\]
i.e.,
\[\left(\frac{N_n(x)}{(-d)^{n/2}} \right)^2 - (f^2(x) + d) \left( \frac{D_n(x)}{(-d)^{n/2}} \right)^2 = 1.\] 
Clearly, when $d = -1$, we have obtained solutions of the polynomial Pell equation which are integer polynomials. When $d = 1$, $\cfrac{N_n(x)}{(-d)^{n/2}}$ and $\cfrac{D_n(x)}{(-d)^{n/2}}$ are integer polynomials if and only if $n$ is even. We complete the proof showing that $d^{[n/2]} \mid N_n(x)$ if and only if $d = \pm 2$, for any $n \geq 1$. For $n = 1$ and $n = 2$, we have
\[N_1(x) = f(x), \quad N_2(x) = 2 f^2(x) + d\]
and the statement is true. Let us observe that if $d \not= \pm 2$, then $d \not\mid N_2(x)$. Now, we proceed by induction. From \eqref{eq:redei-ric}, we have
\[N_n(x) = 2 f(x) N_{n-1}(x) + d N_{n-2}(x) = 2 f(x) d^{[(n-1)/2]}\cdot a + d \cdot d^{(n-2)/2} \cdot b\]
for certain integers $a$ and $b$. From the last equality, which holds by inductive hypothesis, we have that $d^{[n/2]} \mid N_n(x)$. Similarly, we can get the same result for $D_n(x)$.
\end{proof} 

For proving that all the integer polynomial solutions of \eqref{eq:pell-gen} are given in the previous theorem, we need the following lemma.

\begin{lemma} \label{lemma:l}
Let $f(x)$, $P(x)$ and $Q(x)$ be polynomials of degree $m$, $n m$ and $(n-1) m$, respectively, with coefficients of the highest degree that are positive and 
\begin{equation} \label{eq:pell-d} P^2 - (f^2(x) + d)Q^2 =(-d)^n, \end{equation}
with $d \in \mathbb Z^*$. Given $P'(x) = -\cfrac{f(x)}{d} P(x) + \cfrac{f^2(x)+d}{d}Q(x)$ and $Q'(x) = \cfrac{1}{d} P(x) - \cfrac{f(x)}{d} Q(x)$ then
\begin{enumerate}
\item $P'^2 -(f^2(x)+d)Q'^2 = (-d)^{n-1}$
\item $\deg P' < \deg P$ and $\deg Q' < \deg Q$
\end{enumerate} 
\end{lemma}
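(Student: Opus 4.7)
My plan splits along the two conclusions of the lemma, with the algebraic identity in part~(1) being the relatively straightforward piece and the degree bound for $P'$ in part~(2) being the main obstacle.

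For part~(1), I would simply expand $P'^2 - (f^2+d) Q'^2$ directly. Pulling out the factor $1/d^2$, the quantity becomes $\tfrac{1}{d^2}\bigl[\bigl(-fP + (f^2+d)Q\bigr)^2 - (f^2+d)(P - fQ)^2\bigr]$, and grouping the resulting $P^2$, $PQ$, and $Q^2$ terms shows that this equals $-\tfrac{1}{d}\bigl(P^2 - (f^2+d)Q^2\bigr) = -\tfrac{1}{d}(-d)^n = (-d)^{n-1}$. Conceptually this reflects the fact that the element $f - \sqrt{f^2+d}$ of $\mathbb{Z}[X][\sqrt{f^2+d}]$ has norm $-d$, so that multiplying $P + Q\sqrt{f^2+d}$ by $(f - \sqrt{f^2+d})/(-d)$ divides its norm by $-d$; checking that the coordinates of the product are precisely $P'$ and $Q'$ gives the cleanest motivation.

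For the first half of part~(2), I would exploit the Pell equation to match leading terms. Writing $p,\phi,q$ for the positive leading coefficients of $P, f, Q$, the identity $P^2 - (f^2+d)Q^2 = (-d)^n$ forces the leading coefficients of $P^2$ and $f^2 Q^2$ (both of degree $2nm$) to coincide, so $p = \phi q$, and in particular $\deg(P - fQ) < nm$. To sharpen this I would factor
\[(P - fQ)(P + fQ) = P^2 - f^2 Q^2 = dQ^2 + (-d)^n = d\bigl(Q^2 - (-d)^{n-1}\bigr).\]
Since $P + fQ$ has degree exactly $nm$ (leading coefficient $2p\neq 0$) while the right-hand side has degree at most $2(n-1)m$, this gives $\deg(P - fQ) \leq (n-2)m$, hence $\deg Q' = \deg\bigl((P-fQ)/d\bigr) \leq (n-2)m < (n-1)m = \deg Q$.

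The main obstacle is then bounding $\deg P'$: a direct look at $P' = \bigl(-fP + (f^2+d)Q\bigr)/d$ only yields $\deg P' < (n+1)m$ from cancellation of the top-degree terms. The key trick is the identity
\[P' = Q - f Q',\]
which follows immediately by substituting $P - fQ = dQ'$ into the definition of $P'$. Combined with the bound $\deg Q' \leq (n-2)m$ obtained above, this gives $\deg P' \leq \max\bigl(\deg Q,\, m + \deg Q'\bigr) \leq (n-1)m < nm = \deg P$, closing the proof.
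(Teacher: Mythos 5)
Your proof is correct and follows essentially the same route as the paper: direct expansion for part (1), and the factorization $P^2 - f^2Q^2 = (P - fQ)(P + fQ)$ together with degree counting on $d\bigl(Q^2 - (-d)^{n-1}\bigr)$ to get $\deg Q' \le (n-2)m < \deg Q$. Where the paper merely asserts that ``similar considerations'' give $\deg P' < \deg P$, your identity $P' = Q - fQ'$ supplies that omitted step cleanly and is a genuine improvement in completeness.
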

\begin{proof}
\begin{enumerate}
\item We have
\[P'^2 -(f^2(x)+d)Q'^2 = \cfrac{f^2(x)}{d^2}P^2 + \cfrac{(f^2(x)+d)^2}{d^2}Q^2 - (f^2(x)+d)\cfrac{1}{d^2}P^2 - (f^2(x)+d) \cfrac{f^2(x)}{d^2}Q^2 = \]
\[= -(f^2(x)+d) \cfrac{1}{d^2}(P^2 - (f^2(x)+d)Q^2) + \cfrac{f^2(x)}{d^2} (P^2 - (f^2(x)+d)Q^2) =\]
\[= -(f^2(x)+d)(-d)^{n-2}+f^2(x)(-d)^{n-2} = (-d)^{n-1}.\]
\item 
Since $\deg P^2 = 2 n m$, $\deg f^2 Q^2 = 2 n m$, $\deg Q^2 = 2 n m - 2 m$ and it holds
\[P^2 - f^2(x) Q^2 -d Q^2 = (-d)^n \]
we have that $P^2 - f^2 Q^2$ has degree $2 n m - 2 m$. Moreover, we can observe that the degree of $P + f Q$ is $n m$ by the hypothesis that coefficients of the highest degree of $P, f, Q$ are positive. Thus, from $P^2 - f^2 Q^2 = (P - f Q) (P + f Q)$, we have that the coefficients of degree $nm, nm-1, \ldots nm-m$ of the polynomial $P - f Q$ are zero, i.e., $\deg Q' < \deg Q$. Similar considerations prove that $\deg P' < \deg P$.
\end{enumerate}
\end{proof}

\begin{theorem}
All the integer polynomial solutions of
\begin{equation} \label{eq:pell-prop-sol} P^2 - (f^2(x)+d)Q^2 = 1 \end{equation}
are the polynomials $N_n(x)$ and $D_n(x)$ for $d = -1$ and any $n$, $\frac{N_{n}(x)}{(-d)^{n/2}}$ and $\frac{D_{n}(x)}{(-d)^{n/2}}$ for $d = 1, \pm 2$ and $n$ even.
\end{theorem}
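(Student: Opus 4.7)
The plan is to iterate Lemma~\ref{lemma:l} in order to descend an arbitrary integer polynomial solution $(P,Q)$ of \eqref{eq:pell-prop-sol} down to a constant, and then to match the result against the Rédei recursion \eqref{eq:matrices}, whose associated matrix has determinant $-d \neq 0$ and is therefore invertible.

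After normalizing signs so that the leading coefficients of $f$, $P$, and $Q$ are positive, the identity $P^2 = 1 + (f^2+d)Q^2$ forces $\deg P - \deg Q = m := \deg f$; set $L := \deg P$ and $(P_0, Q_0) := (P, Q)$. The crucial observation is that the calculation in part~1 of Lemma~\ref{lemma:l} is insensitive to the value of the right-hand side: the transformation $(P, Q) \mapsto (P', Q')$ there carries any solution of $P^2 - (f^2 + d) Q^2 = K$ to one of $(P')^2 - (f^2+d)(Q')^2 = -K/d$. Starting from $K = 1$ and iterating in $\mathbb{Q}[x]$, after $k$ steps one obtains $P_k, Q_k \in \mathbb{Q}[x]$ satisfying $P_k^2 - (f^2+d) Q_k^2 = (-d)^{-k}$, and the degree analysis from part~2 of Lemma~\ref{lemma:l} adapts verbatim to give the \emph{equalities} $\deg P_k = L - km$ and $\deg Q_k = L - (k+1)m$ as long as $Q_k \neq 0$.

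Iterating until the smallest $n$ for which $Q_n = 0$, the equation degenerates to $P_n^2 = (-d)^{-n}$ with $\deg P_n = 0$. Combined with $\deg P_n = L - nm$, this forces $m \mid L$ and $n = L/m$. Rationality of $P_n$ then forces $(-d)^n$ to be a rational square: this is automatic for $d = -1$, requires $n$ to be even for $d = 1, \pm 2$, and (for $n \geq 1$) fails for every other $d$, recovering Nathanson's list of admissible $d$. Running the same descent on the rational Pell solution $(P^{*}, Q^{*}) := (N_n(x)/(-d)^{n/2},\, D_n(x)/(-d)^{n/2})$ furnished by Theorem~\ref{thm:main} (which has the same bidegree $(nm, (n-1)m)$), both descents terminate at the same level-$0$ constant (up to a global sign which is dictated by the positive leading-coefficient convention). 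Since the matrix in \eqref{eq:matrices} is invertible over $\mathbb{Q}(x)$, ascending $n$ times yields $(P, Q) = (P^{*}, Q^{*})$, i.e.\ $P = N_n(x)/(-d)^{n/2}$ and $Q = D_n(x)/(-d)^{n/2}$. The hypothesis $(P, Q) \in \mathbb{Z}[x]^{2}$ then forces $(-d)^{n/2}$ to divide $N_n$ and $D_n$ in $\mathbb{Z}[x]$, which by the divisibility argument inside the proof of Theorem~\ref{thm:main} pins down $(d, n)$ to exactly the cases in the statement.

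The delicate step is maintaining the \emph{exactness} of the degree bookkeeping throughout the descent, since Lemma~\ref{lemma:l} only records strict inequalities, whereas detecting $m \mid L$ and reading off $n = L/m$ requires the degrees to drop by precisely $m$ at each step. Fortunately, this exactness is already present in the proof of Lemma~\ref{lemma:l}: with positive leading coefficients, $P_k + fQ_k$ has degree exactly $L - km$, and the factorization $(P_k - fQ_k)(P_k + fQ_k) = P_k^2 - f^2 Q_k^2 = (-d)^{-k} + dQ_k^2$ pins down $\deg(P_k - fQ_k) = L - (k+2)m$ whenever $Q_k \neq 0$, so that $\deg Q_{k+1} = L - (k+2)m$ and $\deg P_{k+1} = L - (k+1)m$ as required.
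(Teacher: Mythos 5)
Your proposal is correct and follows essentially the same route as the paper: the paper packages your iterated descent as an induction on $n$, applying Lemma~\ref{lemma:l} once per step and inverting via \eqref{eq:matrices}, which is exactly your descent-then-ascent argument unrolled (your explicit termination analysis even supplies the fact $m \mid \deg P$ that the paper assumes tacitly). The only slip is the parenthetical claim that the condition ``$(-d)^n$ is a rational square'' already excludes all $d \notin \{-1, 1, \pm 2\}$ --- it does not, since e.g.\ for $d = -4$ one has $(-d)^n = 4^n$, a square for every $n$ --- but this is harmless because you subsequently pin down the admissible pairs $(d,n)$ correctly via the integrality/divisibility argument from the proof of Theorem~\ref{thm:main}.
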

\begin{proof}
Let $P(x)$ and $Q(x)$ be solutions of \eqref{eq:pell-prop-sol}, if $\deg f = m$, then must be $\deg P = mn$ and $\deg Q = m(n - 1)$. Now we consider the polynomials $\tilde P(x) = d^{n/2} \cdot P(x)$ and $\tilde Q(x) = d^{n/2} \cdot Q(x)$ and we show that these polynomials coincide with the Rédei one's (unless the sign). We proceed by induction on $n$. The basis of the induction is straightforward to check. 
By Lemma \ref{lemma:l} and the inductive hypothesis $-\cfrac{f(x)}{d} \tilde P(x) + \cfrac{f^2(x)+d}{d}\tilde Q(x) = N_{n-1}(x)$ and $\cfrac{1}{d} \tilde P(x) - \cfrac{f(x)}{d} \tilde Q(x) = D_{n-1}(x)$. Let us observe that we have assumed the positivity of the highest coefficients because of the form of the equation \eqref{eq:pell-prop-sol}, in which only the squares $f^2, P^2, Q^2$ appear, hence hypotheses of Lemma \ref{lemma:l} are satisfied.
Thus, from \eqref{eq:matrices}, we have the thesis.
\end{proof}

We have seen that the Rédei polynomials allow to study and solve a vast class of polynomial Pell equations in a very simple and direct way. This also generalizes the result of Nathanson \cite{Nat} that we can retrieve when $f(x) = x$. Moreover, we can use the above approach for studying the most general case of polynomial Pell equations, i.e., 
\[P^2 - f(x) Q^2 = 1,\]
where $f$ is any integer polynomial. In this case, if we consider $\alpha = f(x)$ and $z = \pm \sqrt{f(x) + 1}$, surely we obtain that
\begin{equation*} \label{eq:pell-gen2} N_n^2(x) - f(x) D_n^2(x) = 1\end{equation*}
for any $n \geq 0$, where $N_n(x) = N_n(f(x), \pm \sqrt{f(x) + 1})$ and $D_n(x) = D_n(f(x), \pm \sqrt{f(x) + 1})$. If $f(x)$ is a polynomial such that $f(x) + 1$ is a square, then the Rédei polynomials $N_n(x)$ and $D_n(x)$ are integer polynomials for any $n \geq 0$. 

\begin{question}
Are all the solutions of the polynomial Pell equation \eqref{eq:pell}, for any $D(x) \in \mathbb Z[X]$, the Rédei polynomials?
\end{question}

\begin{example}
In the following Table \ref{table:1}, we write the Rédei polynomials $N_n(x^4-1, x^2)$ and $D_n(x^4-1, x^2)$,  by Theorem \ref{thm:main} we have that they are the solutions of the polynomial Pell equation $P^2 - (x^4 - 1) Q^2 = 1$.
\begin{table}[hp]
\caption{Polynomial solutions of $P^2 - (x^4 - 1) Q^2 = 1$}
\centering
{
\begin{tabular}{|c|c|c|}
\hline 
 $n$ & $N_n(x^4-1, x^2)$ & $D_n(x^4-1, x^2)$  \cr \hline \hline
 1 & $x^2$ & 1  \cr \hline
 2 & $2 x^4 - 1$ & $2 x^2$  \cr \hline
 3 & $4 x^6 - 3 x^2$ & $4 x^4 - 1$  \cr \hline
 4 & $8 x^8 - 8 x^4 + 1$ & $8 x^6 - 4 x^2$  \cr \hline
 5 & $16 x^{10} - 20 x^6 + 5 x^2$ & $16 x^8 - 12 x^4 + 1$ \cr \hline
\end{tabular}
}
\label{table:1}
\end{table}
\end{example}

\begin{example}
In Table \ref{table:2}, we summarize the Rèdei polynomials $N_n(x^4+2, x^2)$ and $D_n(x^4+2, x^2)$. 
For obtaining integer polynomials that are the solutions of the polynomial Pell equation $P^2 - (x^4 + 2) Q^2 = 1$, we have to consider the Rédei polynomials with even index $n$ and divide them by $-2^{n/2}$. In this way we the following solutions
\[(-x^4-1, -x^2), \quad (2x^8 + 4x^4 + 1, 2x^6 + 2x^2), \quad (-4x^{12}-12x^8-9x^4-1, -4 x^{10} -8x^6 - 3x^2), \ldots\]
\begin{table}[hp]
\caption{Rèdei polynomials $N_n(x^4+2, x^2)$ and $D_n(x^4+2, x^2)$}
\centering
{
\begin{tabular}{|c|c|c|}
\hline 
 $n$ & $N_n(x^4+2, x^2)$ & $D_n(x^4+2, x^2)$  \cr \hline \hline
 1 & $x^2$ & 1  \cr \hline
 2 & $2 x^4 + 2$ & $2 x^2$  \cr \hline
 3 & $4 x^6 + 6 x^2$ & $4 x^4 + 2$  \cr \hline
 4 & $8 x^8 + 16 x^4 + 4$ & $8 x^6 + 8 x^2$  \cr \hline
 5 & $16 x^{10} + 40 x^6 + 20 x^2$ & $16 x^8 + 24 x^4 + 4$ \cr \hline
 6 & $32 x^{12} + 96 x^8 + 72 x^4 + 8$ & $32 x^{10} + 64 x^6 + 24 x^2$ \cr \hline
\end{tabular}
}
\label{table:2}
\end{table}
\end{example}

\begin{example}
If we consider the Rédei polynomials $N_n(x^2+3, x)$ and $D_n(x^2+3, x)$, summarized in Table \ref{table:3}, we have that $N_n(x^2+3, x)/(-d)^{n/2}$ and $D_n(x^2+3, x)/(-d)^{n/2}$ are the solutions of the polynomial Pell equation $P^2 - (x^2 + 3) Q^2 = 1$, however, in this case, we do not have integer polynomials as solutions.

\begin{table}[hp]
\caption{Rèdei polynomials $N_n(x^2+3, x)$ and $D_n(x^2+3, x)$}
\centering
{
\begin{tabular}{|c|c|c|}
\hline 
 $n$ & $N_n(x^2+3, x)$ & $D_n(x^2+3, x)$  \cr \hline \hline
 1 & $x$ & 1  \cr \hline
 2 & $2 x^2 + 3$ & $2 x$  \cr \hline
 3 & $4 x^3 + 9 x$ & $4 x^2 + 3$  \cr \hline
 4 & $8 x^4 + 24 x^2 + 9$ & $8 x^3 + 12 x$  \cr \hline
 5 & $16 x^{5} + 60 x^3 + 45 x$ & $16 x^4 + 36 x^2 + 9$ \cr \hline
\end{tabular}
}
\label{table:3}
\end{table}

\end{example}

In the next scetion, we see that the Rédei polynomials can be generalized in a natural way and they are useful for studying polynomial Pell equations of higher degrees.


\section{Polynomial Pell equation of higher degrees} \label{sec:higher}

The classical Pell equation can be generalized in a natural way to higher degrees. Indeed, we can observe that the Pell equation arises considering the unitary elements of the quotient filed $\mathbb Q[x] / (x^2 - d)$, where $x^2 - d$ is an irreducible polynomial over $\mathbb Q$. Thus, considering the unitary elements of $\mathbb Q[x] / (x^3 - c)$, where $c$ is not a cube, we get the cubic Pell equation
\[x^3 + c y^3 + c z^3 - 3 c x y z = 1,\]
in the unknowns $x, y, z$.
Similarly, we can construct the Pell equations of degree $m$ that is defined by
\begin{equation}  \det \begin{pmatrix} 
x_1 & rx_m & rx_{m-1} & \ldots & rx_2 \cr
x_2 & x_1 & rx_m & \ldots & rx_3 \cr
\vdots & \vdots & \ddots & \ddots & \vdots \cr
x_{m-1} & x_{m-2} & \ldots & x_1 & rx_{m} \cr
x_m & x_{m-1} & x_{m-2} & \ldots & x_1
\end{pmatrix}= 1 \end{equation}
in the unknowns $x_1, \ldots x_m$, where $r$ is not a $m$-th power.
For further details, see \cite{Bar}. Thus, it is natural generalizing the study of the polynomial Pell equation to higher degrees, considering the matrix
\begin{equation} \label{eq:pell-genm} P =  \begin{pmatrix} 
P_1 & RP_m & RP_{m-1} & \ldots & RP_2 \cr
P_2 & P_1 & RP_m & \ldots & RP_3 \cr
\vdots & \vdots & \ddots & \ddots & \vdots \cr
P_{m-1} & P_{m-2} & \ldots & P_1 & RP_{m} \cr
P_m & P_{m-1} & P_{m-2} & \ldots & P_1
\end{pmatrix}, \quad \det P = 1, \end{equation}
where $P_1(x), \ldots, P_m(x)$ are unknown polynomials and $R(x)$ is a given integer polynomial.
Gaunet \cite{Gau} studied the polynomial cubic Pell equation 
\begin{equation} \label{eq:pell-cubic} P_1^3 + R P_2^3 + R^2 P_3^3 - 3R P_1P_2P_3 = 1\end{equation}
when $R(x) = x^3 + a x + b$, characterizing when the equation admits non-trivial solutions and finding them. Here, we study the general polynomial Pell equation of degree $m$, given by equation \eqref{eq:pell-genm}, where $P_1, \ldots P_m$ are unknown polynomial in $\mathbb Z[X]$ and $R(x) = f(x) + r$, with $f(x)$ is any integer polynomial and $r \in \mathbb Z$. As a particular case we obtain solutions of the equation \eqref{eq:pell-cubic} for a different class of polynomials $R$. 

We define the \emph{generalized Rédei polynomials} by means of
\[(z + \sqrt[m]{\alpha})^n = A_n^{(0)}(z, \alpha) + A_n^{(1)}(z, \alpha) \sqrt[m]{\alpha} + \ldots + A_n^{(m-1)}(z, \alpha) \sqrt[m]{\alpha^{m-1}}. \]
In the following, when there is no confusion, we omit the dependence on $z, \alpha$ when we write the generalized Rédei polynomials for the seek of simplicity.
The generalized Rédei polynomials can be obtained by the powers of a particular $m \times m$ matrix. Indeed, by definition, we have
\[A_{n+1}^{(0)} + A_{n+1}^{(1)} \sqrt[m]{\alpha} + \ldots + A_{n+1}^{(m-1)}\sqrt[m]{\alpha^{m-1}} = (A_{n}^{(0)} + A_{n}^{(1)}\sqrt[m]{\alpha} + \ldots + A_{n}^{(m-1)}\sqrt[m]{\alpha^{m-1}})(z + \sqrt[m]{\alpha})\]
i.e.
\[A_{n+1}^{(0)} = z A_{n}^{(0)} + \alpha A_n^{(m-1)}, \quad A_{n+1}^{(i)} = z A_{n}^{i} + A_n^{(i-1)}, \quad i = 1, \ldots, m-1.
\]
Thus, given the matrix
\begin{equation} \label{eq:M} M = \begin{pmatrix} z & 0 & 0 & \ldots &  \alpha \cr 
1 & z & 0 & \ldots & 0 \cr
\vdots & \ddots & \ddots & \vdots & \vdots \cr
0 & \ldots & 1 & z & 0 \cr
0 & \ldots & 0 & 1 & z
 \end{pmatrix}\end{equation}
we can write
\[M \begin{pmatrix} A_n^{(0)} \cr \vdots \cr A_n^{(m-1)} \end{pmatrix} = \begin{pmatrix} A_{n+1}^{(0)} \cr \vdots \cr A_{n+1}^{(m-1)} \end{pmatrix}\]
from which
\begin{equation} \label{eq:mat-red-gen} M^n = \begin{pmatrix} A_n^{(0)} & \alpha A_n^{(m-1)} & \ldots & \alpha A_n^{(1)} \cr A_n^{(1)} & A_n^{(0)} & \ldots & \alpha A_n^{(2)} \cr \vdots & \vdots & \ddots & \vdots \cr A_n^{(m-1)} & A_n^{(m-2)} & \ldots & A_n^{(0)} \end{pmatrix}. \end{equation}
From \eqref{eq:M}, we can observe that $\det M^n = (z^m + (-1)^{m-1} \alpha)^n$, on the other hand, from \eqref{eq:mat-red-gen} we have that $\det M^n = \det P$ when $P_1 = A_n^{(0)}, \ldots, P_m = A_n^{(m-1)}$ and $R = \alpha$. Thus, the generalized Rédei polynomials can be exploited for solving the polynomial Pell equation of higher degrees for convenient choices of $z$ and $\alpha$. Indeed, if $z = f(x) \in \mathbb Z[x]$ and $\alpha = (-f(x))^m + r$, with $r \in \mathbb Z$, then the Rédei polynomials satisfy the polynomial equation $\det P = ((-1)^{m-1} r)^n$, where $R(x) = (-f(x))^m + r$, and the polynomials $\cfrac{A_n^{(0)}(x)}{((-1)^{m-1}r)^{n/m}}, \ldots, \cfrac{A_n^{(m-1)}(x)}{((-1)^{m-1}r)^{n/m}}$ satisfy the polynomial Pell equation $\det X = 1$, for $R(x) = (-f(x))^m + r$. However, these solutions are not ever integer polynomials.
They are integer polynomials in the following cases
\begin{enumerate}
\item $r = -1$ and any $n \geq 0$,
\item $r = 1$ and any $n \equiv 0 \pmod m$,
\item $r = \pm m$ and any $n \equiv 0 \pmod m$, when $m$ is a prime number.
\end{enumerate}
The situations 1 and 2 are immediate to verify. Let us focus on situation 3. We can observe that
\[A_1^{(0)}(x) = f(x), A_2^{(0)}(x) = f^2(x), \ldots, A_{m-1}^{(0)}(x) = f^{m-1}(x), A_m^{(0)}(x) = \pm m\]
Moreover, the characteristic polynomial of $M$ is
\[ x^m + \sum_{i=1}^{m-1} (-1)^i \binom{m}{i} x^{m-i} f^i(x) \pm m,  \]
thus if $m$ is prime, we have
\[\pm m | A_m^{(0)}(x), \pm m | A_{m+1}^{(0)}(x), \ldots, \pm m|A_{2m-1}^{(0)}(x)\]
and consequently $m^2|A_{2m}^{(0)}(x)$. Thus we can prove by induction that the Rédei polynomials are in $\mathbb Z[x]$ for $r = \pm m$ and any $n \equiv 0 \pmod m$, when $m$ is a prime number. 

\begin{question}
Are all the integer polynomial solutions of \eqref{eq:pell-genm}, for $R(x) = (-f(x))^m + r$, the Rédei polynomials?
\end{question}

\section*{Acknowledgments}
The author is really grateful to the anonymous referee for the carefully reading of the paper and for all the suggestions that improved the presentation of the paper.

\end{document}